\documentclass[11pt,leqno]{article}
\usepackage{amsthm,amsfonts,amssymb,amsmath,oldgerm}
\usepackage{epsfig}
\numberwithin{equation}{section}
\usepackage[thinlines]{easybmat}


\setlength{\evensidemargin}{0in} \setlength{\oddsidemargin}{0in}
\setlength{\textwidth}{6in} \setlength{\topmargin}{0in}
\setlength{\textheight}{8in}







\newcommand\br{\begin{remark}}
\newcommand\er{\end{remark}}
\newcommand\bp{\begin{pmatrix}}
\newcommand\ep{\end{pmatrix}}
\newcommand\be{\begin{equation}}
\newcommand\ee{\end{equation}}
\newcommand\ba{\begin{equation}\begin{aligned}}
\newcommand\ea{\end{aligned}\end{equation}}


\newcommand{\bap}{\begin{app}}
\newcommand{\eap}{\end{app}}
\newcommand{\begs}{\begin{exams}}
\newcommand{\eegs}{\end{exams}}
\newcommand{\beg}{\begin{example}}
\newcommand{\eeg}{\end{exaplem}}
\newcommand{\bpr}{\begin{proposition}}
\newcommand{\epr}{\end{proposition}}
\newcommand{\bt}{\begin{theorem}}
\newcommand{\et}{\end{theorem}}
\newcommand{\bc}{\begin{corollary}}
\newcommand{\ec}{\end{corollary}}
\newcommand{\bl}{\begin{lemma}}
\newcommand{\el}{\end{lemma}}
\newcommand{\bd}{\begin{definition}}
\newcommand{\ed}{\end{definition}}
\newcommand{\brs}{\begin{remarks}}
\newcommand{\ers}{\end{remarks}}

\newtheorem{theo}{Theorem}[section]

\newtheorem{exams}[theo]{Examples}

\numberwithin{equation}{section}


\newtheorem{theorem}{Theorem}[section]
\newtheorem{proposition}[theorem]{Proposition}
\newtheorem{corollary}[theorem]{Corollary}
\newtheorem{lemma}[theorem]{Lemma}
\newtheorem{definition}[theorem]{Definition}

\newtheorem{example}[theorem]{Example}
\newtheorem{remark}[theorem]{Remark}




\pagestyle{headings}










\newcommand{\beq}{\begin{equation}}
\newcommand{\eeq}{\end{equation}}
\newcommand{\bs}{\begin{split}}
\newcommand{\es}{\end{split}}




\title{Stability of ZND detonations for Majda's model}

\author{\sc \small
Soyeun Jung \thanks{Indiana University, Bloomington, IN 47405;
soyjung@indiana.edu}
~~~~~
Jinghua Yao \thanks{Indiana University, Bloomington, IN 47405;
yaoj@indiana.edu :
Research of S.J. and J.Y. was partially supported under NSF grants
number DMS-0070765 and DMS-0300487. Thanks to Kevin Zumbrun for
suggesting the problem and for helpful discussions.
 }}

\begin{document}

\maketitle

\begin{abstract}
We evaluate by direct calculation the Lopatinski determinant
for ZND detonations in Majda's model for reacting flow, and show
that on the nonstable (nonnegative real part) complex half-plane it
has a single zero at the origin of multiplicity one, implying
stability. Together with results of Zumbrun on the inviscid limit,
this recovers the result of RoqueJoffre--Vila that viscous
detonations of Majda's model also are stable for sufficiently small
viscosity, for any fixed detonation strength, heat release, and rate
of reaction.

\end{abstract}

\section{Introduction}\label{S.1}
In this note, we verify by explicit computation the spectral
stability in the sense of Erpenbeck \cite{Er} of strong ZND
detonation wave solutions of Majda's model for reactive gas dynamics
\cite{M} with a step-type ignition function.

Consider the inviscid Majda's model
\begin{equation}\label{1.1}
\begin{array}{rll}
(u+qz)_t+\displaystyle\left(\frac{u^2}{2}\right)_x & = & 0, \\
z_t+k\varphi(u)z & = & 0\\
\end{array}
\end{equation}
an analog of the Zeldovich-Von Neumann-Doering (ZND) equations for
reactive gas dynamics, where $z, \varphi \in \mathbb{R}$, $u \geq 0$, $q \ge
0$ and $k>0$. Here, $u$ is a lumped variable modeling the
gas-dynamical quantities of density, momentum, and energy, $z$ is
mass fraction of reactant, $q\ge 0$ a coefficient of heat release of
the reaction, $k>0$ reaction rate, and $\varphi(u)$ is a simple
step-type ``ignition function" that is assumed to be zero below a
certain value $u_i>0$ and one above.

A strong detonation wave of \eqref{1.1} is a traveling-wave solution
\begin{equation}\label{1.2}
(u,z)(x,t)  =  (\overline{u},\overline{z})(x-st), \quad
\lim_{\xi\rightarrow\pm\infty}(\overline{u},\overline{z})(\xi)=(u_\pm,
z_\pm)
 \end{equation}
in the weak, or distributional, sense, smooth except at a single
shock discontinuity at (without loss of generality) $x=0$, known as
a ``Neumann shock", where $u$ jumps from $u_*:=\bar u(0^-)$ to $\bar
u(0^+)$ as $x$ crosses zero from left to right, and satisfying
\begin{equation}\label{1.3}
    z_-=0,  \, z_+=1,
\quad u_-> u_i >u_+.
\end{equation}
and
\begin{equation}\label{Lax}
u_->s>u_+ \geq 0.
\end{equation}
Here, analogy of the model for gas dynamics is only for $u_+ \geq 0$, the ``physical" range defined by Majda.
Computing the Rankine--Hugoniot conditions at the shock at $x=0$, we
find that
\begin{equation}\label{RH}
\begin{aligned}
s(\bar u(0^+)-\bar u(0^-))&=\frac{\bar u(0^+)^2}{2}- \frac{\bar u(0^-)^2}{2},\\
\bar z(0^-)&=\bar z(0^+),
\end{aligned}
\end{equation}
which satisfies
\begin{equation}
u_+^2-2su_+=u_*^2-2su_* \quad \text{or} \quad s=\frac{u_+ +u_*}{2},
\end{equation}
letting $u_-$, $q>0$, $k>0$ vary.
From \eqref{Lax} and the
assumption that $(\bar u,\bar z)$ converges as $x\to \pm \infty$, we
find further by consideration of the traveling-wave ODE (see Section
\ref{S.3}) that
\begin{equation}\label{const}
(\bar u, \bar z)(x)\equiv (u_+,z_+) \; \hbox{\rm for } \; x> 0
\end{equation}
and also $u_-= s+\sqrt{s^2-2qs+u_+^2-2su_+}$ ,
 so that
\begin{equation}\label{range}
s <  u_- < u_*, \quad 0 < q < \frac{(u_*-s)^2}{2s},
\end{equation}
with reaction rate varying in the infinite range $0<k<+\infty$.

That is, we have the standard picture of a strong detonation wave as
a shock advancing to the right into a quiescent (i.e., nonreacting)
constant state with reactant mass fraction $z=1$, raising $u$ above
ignition level $u_i$, followed by a smooth ``reaction tail'' in
which combustion (reaction) occurs, in which $z$ decays
exponentially to value $z_-=0$ and $u$ to $s < u_-<u_*$ as $x\to
-\infty$. See \cite{LyZ1,LyZ2,JLW,Z1,Z2} for further details.

As shown by Erpenbeck \cite{Er}, spectral stability of such waves,
defined as nonexistence of normal modes $e^{\lambda t}w(x)$ with
$\Re \lambda \ge 0$ and $\lambda\ne 0$, may be determined by
examination of a certain Lopatinski determinant $D_{ZND}(\lambda)$
whose zeros on $\Re \lambda \ge 0$ correspond to eigenvalues
$\lambda$. Precisely, spectral stability corresponds to the
condition
\begin{equation}\tag{D}
D_{ZND}(\cdot) \; \hbox{\rm has a single zero on}\; \{\Re \lambda
\ge 0\} \; \hbox{\rm occurring at} \; \lambda=0.
\end{equation}

The Lopatinski condition (D) has been much studied numerically in
both one and multi dimensions, for the full equations of reactive
gas dynamics; see, for example, \cite{KS} and references therein.
However, so far as we know, it has up to now not been verified
analytically for any case. Here, we show by direction calculation
that (D) holds for detonations of the inviscid Majda model with
step-type ignition function.

Since linear or spectral stability concerns only behavior near
values of the profile $(\bar u,\bar z)$, it is clear that this
result extends also to the slightly more general class of (possibly
smooth) ignition functions identified by Roquejoffre and Vila
\cite{RV} that $\phi(u)\equiv 0$ for $u<u_i$ and $\phi(u)\equiv 1$
for $u>u^i$, provided that $ u_->u^i>u_i>u_+. $ That is, we require
in our analysis only that
\begin{equation}\label{weakprop}
\phi(u)=0 \; \hbox{\rm  near  $u_+$  and $\phi(u)=1$ near
$[u_-,u_*]$}.
\end{equation}
A very interesting open problem is whether a corresponding stability
result holds for more general ignition functions satisfying only
\begin{equation}\label{gen}
\phi(u)=0 \; \hbox{\rm  for $u<u_i$  and $\phi(u)>0$ for  $u>u_i$}.
\end{equation}

As discussed in Section \ref{S.6}, these results have implications
also for spectral and nonlinear stability of viscous detonation
waves in the ZND limit as viscosity goes to zero.

\section{Profile solutions}\label{S.3}


Observing that
\begin{equation*}
    \partial_t \overline{u}(x-st)  =  -s\overline{u} ^{\prime},
\quad
\partial_x\overline{u}(x-st)=\overline{u} ^{\prime}
\quad
\partial_t\overline{z}(x-st)=-s\overline{z}^{\prime},
\quad
\partial_x\overline{z}(x-st)=\overline{z}^{\prime},
\end{equation*}
we obtain for a detonation solution \eqref{1.2}--\eqref{RH} the
profile equations
\begin{equation}\label{2.1}
\begin{aligned}
 \displaystyle -s(\overline{u}+q\overline{z})^{\prime}+ \left(\frac {\overline{u}^2}{2}\right)^{\prime} &= 0  \\
  -s\overline{z}'+k\varphi(\overline{u})\overline{z}&=0,
\end{aligned}
\end{equation}
valid on the regions of smoothness $x>0$ and $x<0$. Linearizing
\eqref{2.1} about the equilibrium solution $(u,z)\equiv (u_+,1)$, we
readily find from $s>u_+$ that this is a repelling equilibrium, and
so the only smooth solution of \eqref{2.1} converging to this
equilibrium is the constant solution \eqref{const}.

%
Integrating \eqref{2.1} from $-\infty$ to $+\infty$, we obtain
\begin{equation*}\label{}
    s(u_+ +qz_+)-\frac{{u_+}^2}{2} = s(u_- +qz_-)-\frac{{u_-}^2}{2}.
\end{equation*}
Substituting $z_+=1$ and $z_-=0$, we find that
\begin{equation}\label{3.1}
    2s(u_+ + q)-u_+^2=2su_--u_-^2
\end{equation}

Now, integrating \eqref{2.1} from $-\infty$ to $\xi$, we obtain
$$
  -s(\overline{u}+q\overline{z})+s(\overline{u}_-+q\overline{z}_-)=\displaystyle-\frac{\overline{u}^2}{2}+\frac{\overline{u}_-^2}{2},
$$
from which, by \eqref{1.3} and \eqref{3.1}, we obtain
\begin{equation}\label{ueq}
\overline{u}(\xi)=s\pm\sqrt{s^2-2qs(1-\overline{z}(\xi))+u_+^2-2su_+}.
\end{equation}
Recalling $u_*$ is the left-limit of $\overline{u}(\xi)$, $u_* > s$ and $\overline{z}(0)=1$, we choose the sign $+$, yielding in particular $u_-=
s+\sqrt{s^2-2qs+u_+^2-2su_+}$. Since $\varphi(\overline{u})=1$ for $\xi<0$ in
\eqref{2.1}, $\displaystyle \overline{z}^{\prime}=(k/s)\overline{z}$, hence for
$\xi<0$,
\begin{equation}\label{3.3}
  \overline{z}(\xi)=\overline{z}(0)e^{\frac{k}{s}\xi}=e^{\frac{k}{s}\xi},
\end{equation}
and
\begin{equation}\label{3.4}
  \overline{u}(\xi)=s+ \sqrt{s^2-2qs(1-e^{\frac{k}{s}\xi})+u_+^2-2su_+}.
\end{equation}

\section{Eigenvalue equations}\label{S.4}

Let $(\tilde{u},\tilde{z})$ be a solution of \eqref{1.1} different
from $(\overline{u},\overline{z})$. Subtracting
$(\tilde{u},\tilde{z})$ from $(\overline{u},\overline{z})$, we
obtain for the perturbation variable
\begin{equation*}
(u,z):=(\tilde{u},\tilde{z})-(\overline{u},\overline{z})
\end{equation*}
the perturbation equations
\begin{eqnarray*}
(u+qz)_t+\left(\frac{\tilde{u}^2}{2}-\frac{\overline{u}^2}{2}\right)_{\xi}&=&0  \\
z_t+k(\varphi(\tilde{u})\tilde{z}-\varphi(\overline{u})\overline{z})&=&0.
\end{eqnarray*}
Taylor expanding and dropping $O(|(u,z)|^2)$ terms, we obtain the
linearized equations
$$
\begin{aligned}
  u_t-su_{\xi}+(\overline{u}u)_{\xi} &= qk(d\varphi(\overline{u})\overline{z}u+\varphi(\overline{u})z) \\
  z_t-sz_{\xi} &= -k(d\varphi(\overline{u})\overline{z}u+\varphi(\overline{u})z),
\end{aligned}
$$
and, finally, the linearized eigenvalue equations
\begin{eqnarray*}
  \lambda u-su'+(\overline{u}u)' &=& qk(d\varphi(\overline{u})\overline{z}u+\varphi(\overline{u})z) \\
  \lambda z-sz' &=& -k(d\varphi(\overline{u})\overline{z}u+\varphi(\overline{u})z),
\end{eqnarray*}
where $\prime$ denotes $\partial_{\xi}$.

Expanding and rearranging, we have
$$
\begin{aligned}
  (\overline{u}-s)u'+(\overline{u}-s)'u &=(qkd\varphi(\overline{u})\overline{z}-\lambda)u+qk\varphi(\overline{u})z  \\
  -sz' &= -kd\varphi(\overline{u})\overline{z}u+(-k\varphi(\overline{u})-\lambda)z
\end{aligned}
$$
which gives the matrix equation
\begin{eqnarray*}
  \left\{\left(
    \begin{array}{cc}
      \overline{u}-s & 0 \\
      0 & -s \\
    \end{array}
  \right)
  \left(
    \begin{array}{c}
      u \\
      z \\
    \end{array}
  \right)\right\}'
   &=& \left(
         \begin{array}{cc}
           qkd\varphi(\overline{u})\overline{z}-\lambda & qk\varphi(\overline{u}) \\
           -kd\varphi(\overline{u})\overline{z} & -k\varphi(\overline{u})-\lambda \\
         \end{array}
       \right)
   \left(
     \begin{array}{c}
       u \\
       z \\
     \end{array}
   \right)
    \\
    &=&\left\{\left(
      \begin{array}{cc}
        qkd\varphi(\overline{u})\overline{z} & qk\varphi(\overline{u}) \\
        -kd\varphi(\overline{u})\overline{z} & -k\varphi(\overline{u}) \\
      \end{array}
    \right)-\lambda I\right\}
    \left(
      \begin{array}{c}
        u \\
        z \\
      \end{array}
    \right).
\end{eqnarray*}

Setting
\begin{equation}\label{4.3}
A=\left(
    \begin{array}{cc}
      \overline{u}-s & 0 \\
      0 & -s \\
    \end{array}
  \right),
\;
  W=\left(
    \begin{array}{c}
      u \\
      z \\
    \end{array}
  \right),
\;
  E=\left(
      \begin{array}{cc}
        qkd\varphi(\overline{u})\overline{z} & qk\varphi(\overline{u}) \\
        -kd\varphi(\overline{u})\overline{z} & -k\varphi(\overline{u}) \\
      \end{array}
    \right),
\;
    Z=AW,
\end{equation}
we may write the eigenvalue equations as the first-order ODE system
\begin{equation}\label{Gode}
  Z'=  (E-\lambda I)W
   = (E-\lambda I)A^{-1}Z
   = GZ,
\end{equation}
where $ G=(E-\lambda I)A^{-1}. $

\section{The Lopatinski determinant} \label{S.5}

Following \cite{JLW,Z2}, define on $\Re \lambda \ge 0$ the
Lopatinski determinant
\begin{equation}\label{5.1}
    \displaystyle D_{ZND}(\lambda):=\det (Z^-(\lambda,0),
\; \lambda[\overline{W}]-[A \overline{W}^{\prime} ]),
\end{equation}
where $[h]:=h(0^+)-h(0^-)$ and $Z^-(\lambda, \xi)$ is a bounded
exponentially decaying solution of \eqref{Gode}, analytic in
$\lambda$ and tangent as $\xi \rightarrow -\infty$ to the subspace
of exponentially decaying solutions of the limiting,
constant-coefficient equations $Z^{\prime}=G_-Z.$ By standard
asymptotic ODE theory (the ``gap lemma'' \cite{GZ}), $Z^-$ is
uniquely determined up to a nonvanishing analytic factor not
affecting stability.

\begin{lemma}
The ZND Lopatinski determinant is given (up to nonvanishing analytic
factor) by
\begin{equation}\label{ZNDdet}
D_{ZND}= \big( (u_*-u_+)\lambda+(u_*-u_+ -q-qk\Psi)k \big)
\Big(\frac{\lambda}{k+\lambda}\Big) ,
\end{equation}
\begin{equation}\label{5.4}
\begin{split}
\Psi :& =\int_{-\infty}^{0}e^{-\int_{y}^{0} P(s)
ds}\frac{e^{\frac{(k+\lambda)}{s}y}}{\sqrt{s^2-2qs(1-e^{(k/s)y})+u_+^2-2su_+}}dy, \\
P(\xi):& =\frac{\lambda}{\sqrt{s^2-2qs(1-e^{(k/s)\xi})+u_+^2-2su_-}}.
\end{split}
\end{equation}
\end{lemma}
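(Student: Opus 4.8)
The plan is to compute $D_{ZND}(\lambda)$ directly from its definition \eqref{5.1} by (i) constructing the decaying solution $Z^-(\lambda,\cdot)$ of \eqref{Gode} on $\xi<0$ explicitly, and (ii) evaluating the jump vector $\lambda[\overline W]-[A\overline W']$ across the Neumann shock at $\xi=0$, and then taking the $2\times 2$ determinant. The key simplification is that the profile is known in closed form: for $\xi>0$ we have the constant state \eqref{const}, and for $\xi<0$ the reaction tail is given by \eqref{3.3}--\eqref{3.4}. In particular $\varphi(\overline u)\equiv 1$ on $\xi<0$, so $d\varphi(\overline u)\equiv 0$ there (the step function is locally constant away from $u_i$, cf.\ \eqref{weakprop}), which collapses the matrix $E$ in \eqref{4.3} to $E=\begin{pmatrix} 0 & qk \\ 0 & -k\end{pmatrix}$ on the whole left half-line.

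First I would solve $Z'=GZ=(E-\lambda I)A^{-1}Z$ on $\xi<0$. Writing $Z=AW=(\,(\overline u-s)u,\,-sz\,)^{\top}$, it is cleaner to work in the $W=(u,z)^{\top}$ variables. The second linearized equation $-sz'=(-k-\lambda)z$ (again using $\varphi\equiv 1$, $d\varphi\equiv 0$) decouples and integrates to $z(\xi)=z(0)e^{(k+\lambda)\xi/s}$; this is the source of the factor $e^{(k+\lambda)y/s}$ appearing in $\Psi$. Substituting this $z$ into the first equation $(\overline u-s)u'+(\overline u-s)'u=-\lambda u+qk z$ gives a scalar inhomogeneous linear ODE for $u$, whose homogeneous part is integrated by the standard formula and whose particular solution is obtained by variation of parameters; the integrating factor produces the term $e^{-\int_y^0 P(s)\,ds}$ with $P$ as in \eqref{5.4}, and the denominator $\sqrt{s^2-2qs(1-e^{(k/s)y})+u_+^2-2su_+}$ is exactly $\overline u(y)-s$ from \eqref{3.4}. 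Selecting the decaying branch as $\xi\to-\infty$ fixes $Z^-$ up to a scalar, and evaluating at $\xi=0$ yields the first column of the determinant in terms of $\Psi$.

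Next I would compute the jump terms. Since the state is constant for $\xi>0$, we have $\overline W'(0^+)=0$ and $A\overline W'(0^+)=0$, so $[A\overline W']=-A\overline W'(0^-)$, which is read off from the profile derivatives at $0^-$ using \eqref{3.3}--\eqref{3.4}; similarly $[\overline W]$ is the jump in $(\overline u,\overline z)$ across the Neumann shock, controlled by $u_*-u_+$ via the Rankine--Hugoniot relations \eqref{RH}. Assembling the $2\times 2$ determinant of $(Z^-(\lambda,0),\,\lambda[\overline W]-[A\overline W'])$ and simplifying using $s=(u_++u_*)/2$ should produce the bracketed factor $(u_*-u_+)\lambda+(u_*-u_+-q-qk\Psi)k$, while the decoupled $z$-equation contributes the rational factor $\lambda/(k+\lambda)$.

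I expect the main obstacle to be the bookkeeping in the variation-of-parameters step for $u$: one must correctly match the integrating factor, the forcing term $qk z(\xi)$, and the boundary behavior as $\xi\to-\infty$ so that the resulting integral is precisely the convergent expression $\Psi$ in \eqref{5.4} and not merely proportional to it up to an additive homogeneous solution. Care is needed to verify that the homogeneous decaying solution and the particular solution combine into a single bounded, analytic $Z^-$, and that the normalization absorbed into the ``nonvanishing analytic factor'' does not secretly introduce or cancel a zero on $\{\Re\lambda\ge 0\}$; checking convergence of the $\Psi$-integral at $y\to-\infty$ (where $\overline u(y)\to u_-$ and $\Re(k+\lambda)\ge k>0$) is the step I would scrutinize most carefully.
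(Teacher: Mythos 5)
Your proposal is correct and follows essentially the same route as the paper: exploit $\varphi\equiv 1$, $d\varphi\equiv 0$ on $\xi<0$ to collapse $E$, solve the decoupled $z$-equation explicitly, solve the remaining scalar inhomogeneous equation (your $u$-equation is literally the paper's $Z_1$-equation, since $Z_1=(\overline u-s)u$) by an integrating factor with decay at $-\infty$, evaluate the jump vector using constancy of the profile on $\xi>0$ and $u_*-s=s-u_+$, and assemble the $2\times 2$ determinant. The one step you flag as delicate—reconciling the variation-of-parameters integral with the exact form of $\Psi$—is handled in the paper by a single integration by parts, yielding $Z_1(\lambda,0)=-\tfrac{qk}{k+\lambda}(1-\lambda\Psi)$, exactly as your plan anticipates.
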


\begin{proof}
Combining \eqref{3.3}, \eqref{3.4} and \eqref{4.3}, we obtain
$$
\begin{aligned}
  \lambda[\overline{W}]-[A(\overline{W}^{\prime})]
   & = \displaystyle \lambda \displaystyle \left[ \left(
                 \begin{array}{c}
                   \overline{u} \\
                   \overline{z} \\
                 \end{array}
               \right)\right]
               +(A \overline{W}^{\prime})(0^-) \\
   &= \displaystyle \lambda \left(
                 \begin{array}{c}
                   u_+ - u_* \\
                   0 \\
                 \end{array}
               \right)
               +\left(
                  \begin{array}{cc}
                    u_*-s & 0 \\
                    0 & -s \\
                  \end{array}
                \right)\left(
                         \begin{array}{c}
                          (qk)/(u_*-s) \\
                         k/s \\
                         \end{array}
                       \right)\\
   &= \displaystyle \left(
          \begin{array}{c}
            \lambda u_+ - \lambda u_* + qk \\
            -k \\
          \end{array}
        \right),
\end{aligned}
$$
where we are using $ \overline{W}^{\prime}=\Big(
\displaystyle\frac{qke^{(k/s)\xi}}{\sqrt{s^2-2qs(1-e^{(k/s)\xi})+u_+^2-2su_+}} , \frac{k}{s}e^{(k/s)\xi}\Big)^T$.
Computing
$$
\begin{aligned}
                G = (E-\lambda I)A^{-1} &= \left(
                       \begin{array}{cc}
                         -\lambda & qk \\
                         0 & -k-\lambda \\
                       \end{array}
                     \right)
                     \left(
                       \begin{array}{cc}
                         \overline{u}-s & 0 \\
                         0 & -s \\
                       \end{array}
                     \right)^{-1}
                 = \left(
                       \begin{array}{cc}
                         \displaystyle \frac{-\lambda}{\overline{u}-s} & \displaystyle \frac{-qk}{s} \\
                         0 & \displaystyle \frac{k+\lambda}{s} \\
                       \end{array}
                     \right)
\end{aligned}
$$
and setting $Z=\left(
             \begin{array}{c}
               Z_1 \\
               Z_2 \\
             \end{array}
           \right)$,
we have
$$
    \left(
      \begin{array}{c}
        Z_1(\lambda, \xi) \\
        Z_2(\lambda, \xi) \\
      \end{array}
    \right)^{\prime}=\left(
                       \begin{array}{cc}
                         \displaystyle \frac{-\lambda}{\overline{u}-s} & \displaystyle \frac{-qk}{s} \\
                         0 & \displaystyle \frac{k+\lambda}{s} \\
                       \end{array}
                     \right)\left(
                              \begin{array}{c}
                               Z_1(\lambda, \xi) \\
                               Z_2(\lambda, \xi) \\
                              \end{array}
                            \right),
$$
or
\begin{equation*}\label{}\begin{array}{rll}
Z_1^{\prime}&=&\displaystyle -\frac{\lambda}{\overline{u}-s}Z_1-\frac{qk}{s}Z_2, \\\\
Z_2^{\prime} &=&\displaystyle \frac{k+\lambda}{s}Z_2.
  \end{array}
\end{equation*}
Solving, we obtain
\begin{equation*}\label{}\begin{array}{rll}
Z_1^{\prime} &=& -\displaystyle \frac{\lambda}{\sqrt{s^2-2qs(1-e^{(k/s)\xi})+u_+^2-2su_+}}Z_1-\frac{qk}{s}e^{\frac{(k+\lambda)}{s}\xi}, \\\\
Z_2 &=& e^{\frac{(k+\lambda)}{s}\xi}.
  \end{array}
\end{equation*}

Setting now $\displaystyle
P(\xi):=\frac{\lambda}{\sqrt{s^2-2qs(1-e^{(k/s)\xi})+u_+^2-2su_+}}$ and $\displaystyle
Q(\xi):=-\frac{qk}{s}e^{\frac{(k+\lambda)}{s}\xi}$, we have
\begin{equation}\label{5.2}
\begin{split}
\displaystyle\int P(\xi)d\xi
& =-\frac{2\lambda s}{k\sqrt{s^2-2qs+u_+^2-2su_+}} \\
& \qquad \times \ln\left(\frac{\sqrt{s^2-2qs(1-e^{(k/s)\xi})+u_+^2-2su_+}+\sqrt{s^2-2qs+u_+^2-2su_+}}{\sqrt{2qse^{(k/s)\xi}}}\right),
\end{split}
\end{equation}
which gives $\Big|\displaystyle
e^{-\int_{-\infty}^{0}P(\xi)d\xi}\Big|\leq1$ for $\Re \lambda
\geq0$, and hence, integrating  by parts,
\begin{equation}\label{5.3}
\begin{aligned}
\displaystyle
Z_1(\lambda,0)&= \displaystyle e^{-\int_{-\infty}^{0}P(s)ds}\left(\int_{-\infty}^{0} e^{\int_{-\infty}^{y}P(s)ds}Q(y)dy\right)\\
    &= \displaystyle \int_{-\infty}^{0} e^{-\int_{y}^{0}P(s)ds}Q(y)dy\\
    &= \displaystyle -e^{-\int_{y}^{0}P(s)ds} \frac{qk}{k+\lambda}e^{\frac{(k+\lambda)}{s}y}\bigg|_{-\infty}^{0}+\int_{-\infty}^{0} e^{-\int_{y}^{0}P(s)ds}P(y)\frac{qk}{k+\lambda}e^{\frac{(k+\lambda)}{s}y}dy\\
    &= \displaystyle -\frac{qk}{k+\lambda}\left(1-\lambda \Psi\right),
    \end{aligned}
\end{equation}
where $\Psi$ is as in \eqref{5.4}. By \eqref{5.1}, \eqref{5.2}, and
\eqref{5.3},
\begin{equation}\label{5.5}
\begin{aligned}
    \displaystyle D_{ZND}(\lambda)&=\det (Z^-(\lambda,0),
\;
\lambda[\overline{W}]+A(\overline{W}^{\prime})(0^-)) \\
   &= \det\left(
             \begin{array}{cc}
               Z_1(\lambda,0) & \lambda u_+ - \lambda u_* + qk \\
               Z_2(\lambda,0) & -k \\
             \end{array}
           \right)\\
   &= -kZ_1 + \lambda u_* - \lambda u_+ - qk \\
   &= \displaystyle \frac{qk^2(1-\lambda\Psi)+(\lambda u_* - \lambda u_+ - qk )(k+\lambda)}{k+\lambda}\\
   &= \displaystyle \big( (u_*-u_+)\lambda+(u_*-u_+ - q-qk\Psi)k \big)
\displaystyle \left(\frac{\lambda}{k+\lambda}\right).
   \end{aligned}
\end{equation}
\end{proof}

\section{Verification of spectral stability}

\begin{theorem}\label{main}
For a step-type ignition function, or, more generally, any ignition
function satisfying \eqref{weakprop}, $D_{ZND}$ has a single zero of
multiplicity one on $\{\Re \lambda\ge 0\}$, located at $\lambda=0$;
that is, the Lopatiski condition (D) is satisfied for all ZND
detonations of Majda's model, independent of the choice of $q > 0$,
$k>0$, or $u_\pm$.
\end{theorem}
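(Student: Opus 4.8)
The plan is to work directly from the closed form \eqref{ZNDdet} supplied by the Lemma, writing $D_{ZND}(\lambda) = F(\lambda)\cdot\frac{\lambda}{k+\lambda}$ with $F(\lambda) := (u_*-u_+)\lambda + (u_*-u_+-q-qk\Psi)k = (u_*-u_+)\lambda + (u_*-u_+-q)k - qk^2\Psi$. On the region of interest $\{\Re\lambda\ge 0\}$ the denominator satisfies $\Re(k+\lambda) = k+\Re\lambda \ge k > 0$, so $\frac{1}{k+\lambda}$ is analytic and nonvanishing there (recall also that $D_{ZND}$ is only defined up to a nonvanishing analytic factor). Hence the factor $\frac{\lambda}{k+\lambda}$ contributes precisely one zero, a simple zero at $\lambda=0$. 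The theorem therefore reduces to the two assertions (i) $F(\lambda)\ne 0$ for every $\lambda$ with $\Re\lambda\ge 0$, and (ii) $F(0)\ne 0$, so that the zero of $D_{ZND}$ at the origin is genuinely simple.

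The crux is a uniform bound on the transcendental quantity $\Psi$. First I would observe that, since $\sqrt{s^2-2qs(1-e^{(k/s)\xi})+u_+^2-2su_+} = \overline{u}(\xi)-s > 0$ by \eqref{3.4}, the function $P(\xi)=\lambda/(\overline{u}(\xi)-s)$ satisfies $\int_y^0 P = \lambda\,\tau(y)$ with $\tau(y):=\int_y^0 (\overline{u}(\zeta)-s)^{-1}\,d\zeta \ge 0$, so that $|e^{-\int_y^0 P}| = e^{-\Re(\lambda)\tau(y)}\le 1$ on $\{\Re\lambda\ge 0\}$. Combined with $|e^{(k+\lambda)y/s}| = e^{(k+\Re\lambda)y/s}\le e^{ky/s}$ for $y\le 0$, this yields the clean estimate $|\Psi(\lambda)|\le \Psi(0)$ throughout the closed right half-plane. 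Next I would evaluate $\Psi(0)$ exactly: differentiating \eqref{3.4} gives $\overline{u}' = qk\,e^{(k/s)\xi}/(\overline{u}-s)$, so the integrand of $\Psi$ at $\lambda=0$ is $\overline{u}'/(qk)$, and telescoping yields $qk\,\Psi(0) = \overline{u}(0^-)-\overline{u}(-\infty) = u_*-u_-$.

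With this in hand the conclusion follows from an elementary real-part estimate. Using \eqref{3.1} together with $s=(u_++u_*)/2$, one rewrites the reaction relation as $q = (u_--u_+)(u_*-u_-)/(2s)$, from which $u_--u_+-q = (u_--u_+)(u_-+u_+)/(2s) > 0$; and since $u_*>u_-$ by \eqref{range}, also $u_*-u_+-q > u_--u_+-q > 0$, both resting on the Lax ordering $u_->s>u_+\ge 0$ of \eqref{Lax}. Then for $\Re\lambda\ge 0$, using $\Re\lambda\ge 0$, $u_*-u_+-q>0$, and $|qk^2\Psi|\le qk^2\Psi(0)=k(u_*-u_-)$,
\begin{equation*}
\Re F(\lambda) = (u_*-u_+)\Re\lambda + (u_*-u_+-q)k - \Re(qk^2\Psi) \ge (u_*-u_+-q)k - qk^2\Psi(0) = k(u_--u_+-q) > 0 .
\end{equation*}
This proves (i); and since the same computation gives $F(0)=k(u_--u_+-q)>0$, it proves (ii) as well, so $D_{ZND}$ has exactly one zero on $\{\Re\lambda\ge 0\}$, simple and located at the origin, which is condition (D).

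I expect the main obstacle to be the uniform bound $|\Psi(\lambda)|\le\Psi(0)$ and its exact evaluation at $\lambda=0$: everything else is the nonvanishing of a linear pencil controlled by a perturbation whose size is pinned by that bound, plus the purely algebraic profile identities. The one place demanding care is checking that the two monotonicity inequalities, $|e^{-\int_y^0 P}|\le1$ and $e^{(k+\Re\lambda)y/s}\le e^{ky/s}$, both hold with the correct signs for all $y\le 0$ simultaneously, since it is their combination that collapses the $\lambda$-dependence of $\Psi$ into the single real number $\Psi(0)$.
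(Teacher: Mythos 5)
Your proof is correct and takes essentially the same route as the paper: the paper likewise bounds $|\Psi|$ on $\Re\lambda\ge 0$ by the integral at $\lambda=0$, which it evaluates by antidifferentiation as $\frac{1}{qk}\big((s-u_+)-\sqrt{s^2-2qs+u_+^2-2su_+}\big)=\frac{1}{qk}(u_*-u_-)$ --- exactly your $qk\Psi(0)=u_*-u_-$ --- and then reduces the nonvanishing of the bracket to the positivity of $u_--u_+-q$. The only divergence is that final positivity step: you establish $u_--u_+-q=(u_--u_+)(u_-+u_+)/(2s)>0$ exactly, via the identity $q=(u_--u_+)(u_*-u_-)/(2s)$ from \eqref{3.1}, whereas the paper instead runs an inequality chain from $0\le u_+<s$ and $2qs<(s-u_+)^2$ (i.e., from \eqref{range}); both close the argument, yours marginally more sharply, but the method is the same.
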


\begin{proof}
By \eqref{5.4}, we have for $\Re\lambda \ge 0$
\begin{equation*}
\begin{aligned}
|\Psi|
&\le  \displaystyle \int_{-\infty}^{0} e^{-\int_{y}^{0}\Re P(s)ds}\frac{e^{\frac{(k+\Re \lambda)}{s}y}}{\sqrt{s^2-2qs(1-e^{(k/s)y})+u_+^2-2su_+}}dy \\
&\leq  \displaystyle \int_{-\infty}^{0}\frac{e^{(k/s)y}}{\sqrt{s^2-2qs(1-e^{(k/s)y})+u_+^2-2su_+}}dy \\
&=  \displaystyle \frac{1}{qk} \int_{-\infty}^{0} \frac{d}{dy}
\Big( {\sqrt{s^2-2qs(1-e^{(k/s)y})+u_+^2-2su_+}} \Big) dy \\
   &= \displaystyle \frac{1}{qk}\left( \sqrt{s^2+u_+^2-2su_+}-  \sqrt{s^2-2qs+u_+^2 -2su_+} \right) \\
& = \displaystyle \frac{1}{qk} \left( (s-u_+) - \sqrt{s^2-2qs+u_+^2 -2su_+} \right)
\end{aligned}
\end{equation*}
giving
\begin{equation}
\begin{split}
\Re (u_*-u_+ -q-qk\Psi)
& \geq u_*-u_+ -q-qk|\Psi| \\
& \geq u_*-u_+ - q - s+u_+ +  \sqrt{s^2-2qs+u_+^2-2su_+} \\
& = u_* -s - q +\sqrt{s^2-2qs+u_+^2-2su_+}\\
& = s - u_+ - q +\sqrt{s^2-2qs+u_+^2-2su_+}
\end{split}
\end{equation}
By $0 \leq u_+ < s$ and $2qs < (s-u_+)^2 \leq s^2$,
\begin{equation}
\begin{split}
\Re (u_*-u_+ -q-qk\Psi)
& > \sqrt{2qs}-q+\sqrt{s^2-2qs+u_+^2-2su_+} \\
& > \sqrt{4q^2}-q+\sqrt{s^2-2qs+u_+^2-2su_+} \\
& > 0.
\end{split}
\end{equation}
In particular,
\begin{equation}\label{5.6}
 u_*-u_+ -q-qk\Psi \ne 0 \quad \hbox{\rm for } \quad \Re \lambda \ge 0.
\end{equation}
Combining \eqref{5.5} and \eqref{5.6}, we obtain the result.
\end{proof}

\section{Viscous stability and a result of Roquejoffre--Vila}\label{S.6}
%

Our results have implications also for stability of ``viscous''
detonation waves, i.e., smooth analogs of traveling waves
\eqref{1.2} satisfying the ``viscous'' or parabolic regularization
of \eqref{1.1}:
\begin{equation}\label{reg}
\begin{array}{rll}
(u+qz)_t+\displaystyle\left(\frac{u^2}{2}\right)_x & = &
\epsilon qz_{xx}+ \epsilon u_{xx}, \\
z_t+k\varphi(u)z & = & \epsilon z_{xx},\\
\end{array}
\end{equation}
with $\epsilon >0$. It is shown in \cite{Z2} that stability of
viscous detonation waves in the ZND limit as $\epsilon \to 0$ with
other parameters held fixed is equivalent to Lopatinski stability of
the limiting ZND detonation \eqref{1.2} together with Evans
(equivalently, linearized) stability of the viscous Burgers shock
corresponding to the Neumann shock contained in the ZND detonation
profile, as is well-know to hold by properties of scalar
traveling-waves (see, e.g., \cite{Sa}).

Thus, our results together with those of \cite{Z2} yield the result
for \eqref{reg} of spectral stability of viscous detonation waves in
the ZND limit, similar to an earlier result of Roquejoffre--Vila
\cite{RV} for the corresponding equations with regularization
$\epsilon u_{xx}$ in the $u$ equation alone and applying to the same
class of ignition functions \eqref{weakprop}.
A very interesting open problem would be to extend our results to
the more general class of ignition functions \eqref{gen}, which
would give new information for the viscous stability problem as
well.

We remark that results of \cite{LRTZ} (for Majda's model) and
\cite{TZ} (for the physical reactive Navier--Stokes equations) show
that spectral stability of viscous detonation waves in the (Evans
function) sense of \cite{Z2} implies linearized and nonlinear
orbital stability, hence stability of strong viscous detonation
waves reduces for viscosity sufficiently small to a study of
spectral ZND stability as carried out here.


\end{document}